\newtheorem{theorem}{Theorem}%[section]
\newtheorem{corollary}[theorem]{Corollary}
\newtheorem{proposition}[theorem]{Proposition}
\newtheorem{example}[theorem]{Example}
\newtheorem{problem}[theorem]{Open Problem}
\newcommand{\pk}{{\rm pk\,}}
\newcommand{\lpk}{{\rm pk^{\emph{l}}\,}}
\newcommand{\des}{{\rm des\,}}
\newcommand{\ades}{{\rm ades\,}}
\newcommand{\msn}{{\mathcal S}_n}
\newcommand{\rz}{{\rm RZ}}
\newcommand{\sep}{\preceq}
\newcommand{\sn}{S_n}
\newcommand{\lrf}[1]{\lfloor #1\rfloor}
\newcommand{\lrc}[1]{\lceil #1\rceil}
\newcommand{\sgn}{{\rm sgn\,}}
\title{Derivative polynomials and permutations by numbers of interior peaks and left peaks\footnote{This work is supported by the Fundamental Research Funds for the Central Universities (N100323013).}}
\author
{Shi-Mei Ma \footnote{ {\it Email address:}
shimeima@yahoo.com.cn (S.-M. Ma)} }
\date{\footnotesize Department of Information and Computing Science,
        Northeastern University at Qinhuangdao,\\ Hebei 066004,
        China}
\begin{document}

\maketitle

\begin{abstract}
Derivative polynomials in two variables are defined by repeated differentiation of the tangent and secant functions. We establish the connections between the coefficients of these derivative polynomials and the numbers of interior and left peaks over the symmetric group. Properties of the generating functions for the numbers of interior and left peaks over the symmetric group, including recurrence relations, generating functions and real-rootedness, are studied.
%\bigskip\\
%{\sl MSC:}\quad 05A15; 05A05
\bigskip\\
{\sl Keywords:}\quad Derivative polynomials; Interior peaks; Left peaks
\end{abstract}
%\date{\today}
%%%%%%%%%%%%%%%%%%%%%%%%%%%%%%%%%%%%%%%%%%%
\section{Introduction}
%\hspace*{\parindent}
%%%%%%%%%%%%%%%%%%%%%%%%%%%%%%%%%%%%%%%%%%
Let $\msn$ denote the symmetric group of all permutations of $[n]$, where $[n]=\{1,2,\ldots,n\}$.
A permutation $\pi=\pi(1)\pi(2)\cdots\pi(n)\in\msn$
is {\it alternating} if $\pi(1)>\pi(2)<\cdots \pi(n)$. In other words, $\pi(i)<\pi({i+1})$ if $i$ is even and $\pi(i)>\pi({i+1})$ if $i$ is odd. Similarly $\pi$ is {\it reverse alternating} if $\pi(1)<\pi(2)>\cdots \pi(n)$.
Let $E_n$ denote the number of alternating permutations in $\msn$. For instance, $E_4=5$, corresponding to the permutations $2143$, $3142$, $3241$, $4132$ and $4231$. The number $E_n$ is called an {\it Euler number} because Euler considered the numbers $E_{2n+1}$.
The bijection $\pi\mapsto \pi^c$ on $\msn$ defined by $\pi^c(i)=n+1-\pi(i)$ shows that $E_n$ is also the number of reverse alternating permutations in $\msn$. Alternating permutations have rich combinatorial structure and we refer the reader to the survey paper by Stanley~\cite{Stanley} for recent progress on this subject.

In 1879, Andr\'e~\cite{Andre79} observed that
\begin{align*}
 \sum_{n=0}^\infty E_n\frac{x^n}{n!}
   &= \tan x+\sec x, \\
  &= 1+x+\frac{x^2}{2!}+2\frac{x^3}{3!}+5\frac{x^4}{4!}+16\frac{x^5}{5!}+\cdots.
\end{align*}
Since the tangent is an odd function and the secant is an even function, we have
$$\sum_{n=0}^\infty E_{2n+1}\frac{x^{2n+1}}{(2n+1)!}=\tan x \quad{\text and} \quad \sum_{n=0}^\infty E_{2n}\frac{x^{2n}}{(2n)!}=\sec x.$$
For this reason the integers $E_{2n+1}$ are sometimes called the {\it tangent numbers} and the integers $E_{2n}$ are called the {\it secant numbers}.

%The classical {\it Euler numbers} $E_n$ are defined by
%$$\frac{2e^t}{e^{2t}+1}=\sum_{n=0}^{\infty}E_n\frac{t^n}{n!}.$$
%It is well known that $E_{2n}=(-1)^nA_{2n}$ and $E_{2n+1}=0$ (see~\cite[p.~48]{Comtet74}).

Let $D_x$ denote the differential operator $\frac{d}{dx}$.
In 1995, Hoffman~\cite{Hoffman95} considered two sequences of {\it derivative polynomials} defined respectively by
\begin{equation}\label{derivapoly-1}
D_x^n(\tan (x))=P_n(\tan (x))\quad {\text and}\quad D_x^n(\sec (x))=\sec (x) Q_n(\tan (x))
\end{equation}
for $n\geq 0$.
From the chain rule it follows that the polynomials $P_n(u)$ satisfy $P_0(u)=u$ and $P_{n+1}(u)=(1+u^2)P_n'(u)$, and similarly $Q_0(u)=1$ and $Q_{n+1}(u)=(1+u^2)Q_n'(u)+uQ_n(u)$. In particular, the numbers $P_n(0)$ and $Q_n(0)$ are respectively the tangent and secant numbers.
Recently, several combinatorial formulas for these derivative polynomials have been extensively investigated (see \cite{Cvijovic09,Franssens07,Hoffman99,Josuat10} for instance). For example,
let
$$\tan^k (x)=\sum_{n\geq k}T(n,k)\frac{x^n}{n!}$$
and
$$\sec (x) \tan^k (x)=\sum_{n\geq k}S(n,k)\frac{x^n}{n!}.$$
The numbers $T(n,k)$ and $S(n,k)$ are respectively called the {\it tangent numbers of order $k$} (see~\cite[p.~428]{Carlitz72}) and the {\it secant numbers of order $k$} ((see~\cite[p.~305]{Carlitz75})).
Clearly, the numbers $T(n,1)$ and $S(n,0)$ are the tangent and secant numbers, respectively.
Cvijovi\'c~\cite[Theorem 1]{Cvijovic09} obtained that
$$P_n(x)=T(n,1)+\sum_{k=1}^{n+1}\frac{1}{k}T(n+1,k)x^k$$
and
$$Q_n(x)=\sum_{k=0}^nS(n,k)x^k.$$

The organization of this paper is as follows.
In Section 2, derivative polynomials in two variables are defined by repeated differentiation of the tangent and secant functions. In Section 3, we study the connections between interior peaks polynomials and left peak polynomials. In Section 4, we obtain several explicit formulas.
In Section 5, we give both central and local limit theorems for the coefficients of certain polynomials.
%%%%%%%%%%%%%%%%%%%%%%%%%%%%%%%%%%%%%%%%%%%
\section{Triangular arrays}
%\hspace*{\parindent}
%%%%%%%%%%%%%%%%%%%%%%%%%%%%%%%%%%%%%%%%%%
Since $D_x(\tan (x))=\sec^2 (x)$ and $D_x(\sec (x))=\tan (x) \sec (x)$, it is convenient to set $y=\tan (x)$ and $z=\sec (x)$. Hence we have $D_x(y)=z^2$ and $D_x(z)=yz$. It is natural to consider
the triangular arrays $(W_{n,k})_{n\geq 1,0\leq k\leq \lrf{{(n-1)}/{2}}}$, $(W_{n,k}^{\textit{l}})_{n\geq 1,0\leq k\leq \lrf{{n}/{2}}}$ and $(R_{n,k})_{n\geq 1,0\leq k\leq n}$ defined respectively by
\begin{equation}\label{derivapoly-2}
D_x^n(y)=\sum_{k=0}^{\lrf{({n-1})/{2}}}W_{n,k}y^{n-2k-1}z^{2k+2},\quad D_x^n(z)=\sum_{k=0}^{\lrf{{n}/{2}}}W_{n,k}^{\textit{l}}y^{n-2k}z^{2k+1}
\end{equation}
and
\begin{equation}
D_x^n(y+z)=\sum_{k=0}^nR_{n,k}y^{n-k}z^{k+1}.
\end{equation}

In the following discussion we will always assume that $n\geq 1$ and $0\leq k\leq n$.
By the linearity of $D_x$, we have
\begin{equation}\label{Trirelation}
R_{n,k}=\begin{cases}
W_{n,\frac{k-1}{2}} & \text{if $k$ is odd},\\
W_{n,\frac{k}{2}}^{\textit{l}} & \text{if $k$ is even}.
\end{cases}
\end{equation}
%\begin{equation}\label{Trirelation}
%W_{n,k}=R_{n,2k+1}, \quad W_{n,k}^{\emph{l}}=R_{n,2k}.
%\end{equation}

\begin{theorem}
For $n\geq 1$ and $0\leq k\leq n$, we have
\begin{equation}\label{recurrence-1}
R_{n+1,k}=(k+1)R_{n,k}+(n-k+2)R_{n,k-2},
\end{equation}
\begin{equation}\label{recurrence-11}
W_{n,k}=(2k+2)W_{n-1,k}+(n-2k)W_{n-1,k-1}
\end{equation}
and
\begin{equation}\label{recurrence-111}
W^{\textit{l}}_{n,k}=(2k+1)W^{\textit{l}}_{n-1,k}+(n-2k+1)W^{\textit{l}}_{n-1,k-1}.
\end{equation}
\end{theorem}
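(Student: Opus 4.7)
My plan is to prove each of the three identities by applying the operator $D_x$ once to the defining expansion, and then matching coefficients of the monomials $y^a z^b$ that appear. The only inputs needed are $D_x(y)=z^2$ and $D_x(z)=yz$, so the product rule gives, for any $a,b\ge 0$,
$$D_x(y^a z^b)=a\,y^{a-1}z^{b+2}+b\,y^{a+1}z^{b}.$$
Thus each monomial in an expansion of $D_x^n(y)$, $D_x^n(z)$ or $D_x^n(y+z)$ splits into exactly two monomials under one further differentiation, and the recurrences emerge from a straightforward reindexing.

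First I would prove (\ref{recurrence-1}) directly. Writing
$$D_x^{n+1}(y+z)=D_x\Bigl(\sum_{k=0}^n R_{n,k}\,y^{n-k}z^{k+1}\Bigr)$$
and applying the formula with $a=n-k$, $b=k+1$, the $k$-th summand contributes $(n-k)\,y^{n-k-1}z^{k+3}$ together with $(k+1)\,y^{n-k+1}z^{k+1}$. Extracting the coefficient of $y^{n+1-j}z^{j+1}$ on the right means taking $j=k+2$ in the first piece and $j=k$ in the second; the result is exactly $(j+1)R_{n,j}+(n-j+2)R_{n,j-2}$, with the convention $R_{n,k}=0$ outside $0\le k\le n$, which is (\ref{recurrence-1}).

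For (\ref{recurrence-11}) and (\ref{recurrence-111}), the cleanest route is to feed the just-proved (\ref{recurrence-1}) through the parity split (\ref{Trirelation}). Setting $k=2j+1$ in (\ref{recurrence-1}) converts every $R$-entry into a $W$-entry and yields $W_{n+1,j}=(2j+2)W_{n,j}+(n-2j+1)W_{n,j-1}$; the shift $n\mapsto n-1$ then gives (\ref{recurrence-11}). Setting $k=2j$ does the analogous job for $W^{\textit{l}}$, producing (\ref{recurrence-111}). Equivalently, one can repeat the differentiation argument directly on the two expansions in (\ref{derivapoly-2}); both routes lead to identical calculations.

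I do not anticipate any real obstacle: the proof is a mechanical use of the product rule followed by coefficient matching in the variables $y,z$. The only point that needs a quick check is the boundary behaviour at $k=0$ and at the top of each range $0\le k\le\lrf{(n-1)/2}$ or $0\le k\le\lrf{n/2}$, but this is automatic because the absent coefficients on the right-hand sides of the recurrences are zero by definition.
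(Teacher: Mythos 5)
Your proposal is correct and follows essentially the same route as the paper: apply $D_x$ once to the expansion of $D_x^n(y+z)$ using $D_x(y^az^b)=ay^{a-1}z^{b+2}+by^{a+1}z^b$ and match coefficients, then obtain the $W$ and $W^{\textit{l}}$ recurrences either by the same differentiation on the expansions in~(\ref{derivapoly-2}) (the paper's ``similarly'') or, equivalently, via the parity split~(\ref{Trirelation}). The coefficient bookkeeping and boundary conventions you describe all check out.
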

\begin{proof}
Note that
$$D_x^{n+1}(y+z)=D_x(D_x^n(y+z))=\sum_{k=0}^n(k+1)R_{n,k}y^{n-k+1}z^{k+1}+\sum_{k=0}^n(n-k)R_{n,k}y^{n-k-1}z^{k+3}$$
Thus we obtain~(\ref{recurrence-1}).
Similarly, we get~(\ref{recurrence-11}) and~(\ref{recurrence-111}).
\end{proof}

%%%%%%%%%%%%%%%%%%%%%%%%%%%%%%%%%%%%%%%%%%%
%\section{Generating functions}
%\hspace*{\parindent}
%%%%%%%%%%%%%%%%%%%%%%%%%%%%%%%%%%%%%%%%%%
The numbers $W_{n,k}$ and $W_{n,k}^{\textit{l}}$ arise often in combinatorics and other branches of mathematics (see~\cite[\textsf{A008303, A008971}]{Sloane}). In~\cite{Petersen06,Petersen07}, Petersen studied the peak statistic over $\msn$. Let $\pi=\pi(1)\pi(2)\cdots \pi(n)\in\msn$. An {\it interior peak} in $\pi$ is an index $i\in\{2,3,\ldots,n-1\}$ such that $\pi(i-1)<\pi(i)>\pi(i+1)$.
Let $\pk(\pi)$ denote {\it the number of
interior peaks} in $\pi$. An {\it left peak} in $\pi$ is an index $i\in[n-1]$ such that $\pi(i-1)<\pi(i)>\pi(i+1)$, where we take $\pi(0)=0$.
Let $\lpk(\pi)$ denote {\it the number of
left peaks} in $\pi$. For example, the permutation $\pi=21435$ has $\pk(\pi)=1$ and $\lpk(\pi)=2$.

Let
$$W_n(x)=\sum_{\pi\in\sn}x^{\pk(\pi)}\quad {\text and}\quad W_n^{\textit{l}}(x)=\sum_{\pi\in\sn}x^{\lpk(\pi)}.$$
Note that $\deg W_n(x)=\lrf{(n-1)/2}$ and $\deg W_n^{\textit{l}}(x)=\lrf{n/2}$.
Using (\ref{recurrence-11}) and (\ref{recurrence-111}), it is easy to verify that $W_{n,k}$ is the number of permutations in $\msn$ with $k$ interior peaks, and $W^{\textit{l}}_{n,k}$ is the number of permutations in $\msn$ with $k$ left peaks. Hence
$$W_n(x)=\sum_{k\geq0}W_{n,k}x^k\quad {\text and}\quad W_n^{\textit{l}}(x)=\sum_{k\geq0}W_{n,k}^{\textit{l}}x^k.$$
Therefore, by~(\ref{recurrence-11}), the polynomials $W_n(x)$ satisfy
\begin{equation*}
W_{n+1}(x)=(nx-x+2)W_n(x)+2x(1-x)D_xW_n(x),
\end{equation*}
with initial values $W_1(x)=1$, $W_2(x)=2$ and $W_3(x)=4+2x$.
By~(\ref{recurrence-111}), the polynomials $W_n^{\textit{l}}(x)$ satisfy
\begin{equation*}
W_{n+1}^\textit{l}(x)=(nx+1)W_n^{\textit{l}}(x)+2x(1-x)D_xW_n^{\textit{l}}(x),
\end{equation*}
with initial values $W_1^{\textit{l}}(x)=1$, $W_2^{\textit{l}}(x)=1+x$ and $W_3^{\textit{l}}(x)=1+5x$.

We now present a connection between the polynomials $W_n(x)$ and $W_n^{\textit{l}}(x)$.
\begin{theorem}\label{thm-11}
For $n\geq 1$, we have
\begin{equation*}
W_n(u)=(u-1)^{\frac{n+1}{2}}u^{-1}P_n((u-1)^{-\frac{1}{2}})
\end{equation*}
and
\begin{equation*}
W_n^{\textit{l}}(u)=(u-1)^{\frac{n}{2}}Q_n((u-1)^{-\frac{1}{2}}).
\end{equation*}
\end{theorem}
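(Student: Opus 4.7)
The plan is to check that both right-hand sides satisfy the same first-order recurrences as $W_n(u)$ and $W_n^{\textit{l}}(u)$, with matching initial values at $n=1$. Let me set
$$\widetilde W_n(u) := (u-1)^{(n+1)/2}u^{-1}P_n((u-1)^{-1/2}), \qquad \widetilde W_n^{\textit{l}}(u) := (u-1)^{n/2}Q_n((u-1)^{-1/2}).$$
For the base case I would use $P_1(v)=1+v^2$ and $Q_1(v)=v$ (coming from $P_0(v)=v$, $Q_0(v)=1$) to verify $\widetilde W_1(u)=1=W_1(u)$ and $\widetilde W_1^{\textit{l}}(u)=1=W_1^{\textit{l}}(u)$ directly.

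The main computation is to propagate the derivative polynomial recurrences through the change of variables. I would introduce $v=(u-1)^{-1/2}$, so that $1+v^2 = u/(u-1)$ and $dv/du = -\tfrac12(u-1)^{-3/2}$. Writing $g_n(u):=P_n(v)$ and $h_n(u):=Q_n(v)$, the chain rule converts the derivatives in $v$ into derivatives in $u$:
$$P_n'(v) = -2(u-1)^{3/2}g_n'(u), \qquad Q_n'(v) = -2(u-1)^{3/2}h_n'(u).$$
Plugging into $P_{n+1}(v)=(1+v^2)P_n'(v)$ yields $g_{n+1}(u)=-2u(u-1)^{1/2}g_n'(u)$, and similarly $h_{n+1}(u)=-2u(u-1)^{1/2}h_n'(u)+(u-1)^{-1/2}h_n(u)$.

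From here I would substitute $g_n(u)=(u-1)^{-(n+1)/2}u\,\widetilde W_n(u)$ into the expression $\widetilde W_{n+1}(u)=(u-1)^{(n+2)/2}u^{-1}g_{n+1}(u)=-2(u-1)^{(n+3)/2}g_n'(u)$, differentiate, and collect terms. The half-integer powers of $u-1$ cancel by design, and what remains simplifies to
$$\widetilde W_{n+1}(u) = \bigl((n-1)u+2\bigr)\widetilde W_n(u)+2u(1-u)\widetilde W_n'(u),$$
which is exactly the recurrence for $W_n(u)$ derived earlier from (\ref{recurrence-11}). The analogous substitution $h_n(u)=(u-1)^{-n/2}\widetilde W_n^{\textit{l}}(u)$ into $\widetilde W_{n+1}^{\textit{l}}(u)=(u-1)^{(n+1)/2}h_{n+1}(u)$ gives
$$\widetilde W_{n+1}^{\textit{l}}(u) = (nu+1)\widetilde W_n^{\textit{l}}(u)+2u(1-u)\bigl(\widetilde W_n^{\textit{l}}\bigr)'(u),$$
matching the recurrence for $W_n^{\textit{l}}(u)$ derived from (\ref{recurrence-111}). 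Induction on $n$ then closes the proof.

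The only delicate point is the bookkeeping of the fractional exponents $(u-1)^{(n+1)/2}$ and $(u-1)^{n/2}$ when differentiating, together with the sign arising from $dv/du<0$; but the exponents are chosen precisely so that all half-powers drop out, so the computation is forced once the substitution is fixed. I expect this routine verification to be the only real work.
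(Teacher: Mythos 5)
Your proposal is correct, and I checked the computations you describe: the transported recurrences $g_{n+1}=-2u(u-1)^{1/2}g_n'$ and $h_{n+1}=-2u(u-1)^{1/2}h_n'+(u-1)^{-1/2}h_n$ are right, the half-powers of $u-1$ do cancel as claimed, and the resulting recurrences $\widetilde W_{n+1}=((n-1)u+2)\widetilde W_n+2u(1-u)\widetilde W_n'$ and $\widetilde W_{n+1}^{\textit{l}}=(nu+1)\widetilde W_n^{\textit{l}}+2u(1-u)(\widetilde W_n^{\textit{l}})'$ coincide with the recurrences the paper derives for $W_n$ and $W_n^{\textit{l}}$ from (\ref{recurrence-11}) and (\ref{recurrence-111}); together with the base case $n=1$ this closes the induction. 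However, this is a genuinely different route from the paper's. The paper does not induct at all: it equates the two expansions of $D_x^n(y)$ and $D_x^n(z)$ given by (\ref{derivapoly-1}) and (\ref{derivapoly-2}), eliminates $z$ via $z^2=1+y^2$ to get $P_n(y)=(y^{n-1}+y^{n+1})W_n(1+y^{-2})$ and $Q_n(y)=y^nW_n^{\textit{l}}(1+y^{-2})$, and then substitutes $u=1+y^{-2}$; the theorem falls out in one line. The paper's argument is shorter and explains where the change of variable $u\mapsto(u-1)^{-1/2}$ comes from, while yours trades that insight for a self-contained verification that uses only the two recurrence systems and requires the more delicate bookkeeping of fractional exponents that you flag. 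One minor point worth making explicit in your write-up: the induction establishes the identity as an equality of functions of $u$ (say on $u>1$, where the half-powers are defined), which is the sense in which the theorem's statement is to be read.
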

\begin{proof}
Set $y=\tan (x)$ and $z=\sec (x)$. Note that $z^2=y^2+1$.
Comparing~(\ref{derivapoly-1}) with~(\ref{derivapoly-2}), we have
\begin{equation*}
% \nonumber to remove numbering (before each equation)
 P_n(y) = \sum_{k=0}^{\lrf{{(n-1)}/{2}}}W_{n,k}y^{n-2k-1}(1+y^2)^{k+1}
   = (y^{n-1}+y^{n+1})W_{n}(1+y^{-2}),
\end{equation*}
and
\begin{equation*}
% \nonumber to remove numbering (before each equation)
 Q_n(y) = \sum_{k=0}^{\lrf{{n}/{2}}}W_{n,k}^{\textit{l}}y^{n-2k}(1+y^2)^k
   = y^nW_{n}^{\textit{l}}(1+y^{-2}).
\end{equation*}
Let $u=1+y^{-2}$. Then the desired results follows immediately.
\end{proof}

Let $R_n(x)=\sum_{k=0}^nR_{n,k}x^k$.
By~(\ref{recurrence-1}), we have
\begin{equation}\label{recurrence-2}
R_{n+1}(x)=(1+nx^2)R_{n}(x)+x(1-x^2)R_n'(x) \quad {\text for}\quad n\geq 1,
\end{equation}
with initial values $R_1(x)=1+x$, $R_2(x)=1+2x+x^2$ and $R_3(x)=1+4x+5x^2+2x^3$.
Set $R_{0}(x)=1$. For $0\leq n\leq 6$, the coefficients of $R_{n}(x)$
can be arranged as follows with $R_{n,k}$ in row $n$ and column $k$:
$$\begin{array}{ccccccc}
  1 &  &  &  & & &\\
  1 & 1 &  &  & & &\\
  1 & 2 & 1 &  & & &\\
  1 & 4 & 5 & 2 & &  &\\
  1 & 8 & 18 & 16 & 5 & &\\
  1 & 16 & 58 & 88 & 61& 16 &\\
  1 & 32& 179 &416 &479 &272 &61
\end{array}$$
Using~(\ref{Trirelation}), we get
\begin{equation}\label{interwin}
R_{n}(x)=xW_n(x^2)+W_n^{\textit{l}}(x^2).
\end{equation}
Therefore, $R_{n,0}=1$, $R_{n,1}=2^{n-1}$, $\sum_{k=0}^nR_{n,k}=2n!$ and $R_{n,n}=R_{n-1,n-2}=E_n$ for $n\geq 2$.

%\hspace*{\parindent}
%%%%%%%%%%%%%%%%%%%%%%%%%%%%%%%%%%%%%%%%%%
%%%%%%%%%%%%%%%%%%%%%%%%%%%%%%%%%%%%%%%%%%%
\section{Identities}
%\hspace*{\parindent}
%%%%%%%%%%%%%%%%%%%%%%%%%%%%%%%%%%%%%%%%%%
Recently, much attention has been paid to the properties of the polynomials $W_n(x)$ and $W_n^{\textit{l}}(x)$.
For a permutation $\pi\in\msn$, we define a {\it descent} to be a position $i$ such that $\pi(i)>\pi(i+1)$. Denote by $\des(\pi)$ the number of descents of $\pi$. The generating function for descents is
\begin{equation*}
A_n(x)=\sum_{\pi\in\msn}x^{\des(\pi)},
\end{equation*}
which is well known as the {\it Eulerian polynomial}.
The exponential generating function for $A_n(x)$ is
\begin{equation}\label{Axz}
A(x,z)=1+\sum_{n\geq 1}A_n(x)\frac{z^n}{n!}=\frac{(1-x)e^{z(1-x)}}{1-xe^{z(1-x)}}.
\end{equation}
By the theory of {\it enriched P-partitions}, Stembridge~\cite[Remark 4.8]{Stembridge97} showed that
\begin{equation}\label{Stembridge}
W_n\left(\frac{4x}{(1+x)^2}\right)=\frac{2^{n-1}}{(1+x)^{n-1}}A_n(x).
\end{equation}
In~\cite[Observation 3.1.2]{Petersen06}, Petersen observed that
\begin{equation}\label{Petersen}
W_n^{\textit{l}}\left(\frac{4x}{(1+x)^2}\right)=\frac{1}{(1+x)^n}\left((1-x)^n+\sum_{i=1}^n\binom{n}{i}(1-x)^{n-i}2^ixA_i(x)\right).
\end{equation}

In order to simplify the identities~(\ref{Stembridge}) and~(\ref{Petersen}),
we will present another connection between $W_n(x)$ and $W_n^{\textit{l}}(x)$.
Let $C_n$ denote the set of signed permutations of $\pm[n]$ such that $\omega(-i)=-\omega(i)$ for all $i$, where $\pm[n]=\{\pm1,\pm2,\ldots,\pm n\}$.
Let
$$C_n(x)=\sum_{\omega\in C_n}x^{\des(\omega)} \quad{\text and} \quad \widetilde{C}_n(x)=\sum_{\omega\in C_n}x^{\ades(\omega)} ,$$
where
$$\des(\omega)=|\{i\in[0,n-1]:\omega(i)>\omega({i+1})\}|$$
and
$$ \ades(\omega)=|\{i\in[0,n]:\omega(i)>\omega({i+1})\}|$$
with $\omega(0)=\omega(n+1)=0$. For example, if $$\omega=(-2,-4,6,-8,1,3,7,5),$$
then $\des(\omega)=|\{0,1,3,7\}|=4$ and $\ades(\omega)=|\{0,1,3,7,8\}|=5$.
Recently, Dilks, Petersen and Stembridge~\cite{Petersen09} studied the combinatorial expansions for the affine Eulerian polynomials $\widetilde{C}_n(x)$.
From~\cite[Cor.~5.6 and Cor.~5.7]{Petersen09}, we get
\begin{equation*}
2xW_n\left(\frac{4x}{(1+x)^2}\right)=\frac{\widetilde{C}_n(x)}{(1+x)^{n-1}}
\end{equation*}
and
\begin{equation*}
(1+x)W_n^{\textit{l}}\left(\frac{4x}{(1+x)^2}\right)=\frac{C_n(x)}{(1+x)^{n-1}}.
\end{equation*}

For $n\geq 1$, set $$T_n(x)=C_n(x^2)+\frac{1}{x}\widetilde{C}_n(x^2).$$
From their generating functions (see~\cite[Sec.~6.2]{Petersen09}), we get
$$\widetilde{C}(x,z)=1+\sum_{n\geq 1}\widetilde{C}_n(x)\frac{z^n}{n!}=\frac{1-x}{1-xe^{2z(1-x)}},$$
$$C(x,z)=1+(1+x)z+\sum_{n\geq 2}C_n(x)\frac{z^n}{n!}=\frac{(1-x)e^{z(1-x)}}{1-xe^{2z(1-x)}}.$$
Therefore,
\begin{equation}\label{Txz}
T(x,z)=1+\sum_{n\geq 1}T_n(x)\frac{z^n}{n!}=C(x^2,z)+\frac{1}{x}(\widetilde{C}(x^2,z)-1)=\frac{e^{z(1-x^2)}-x}{1-xe^{z(1-x^2)}}.
\end{equation}
Comparing~(\ref{Axz}) with~(\ref{Txz}), we get
\begin{equation}\label{TxzAxz}
x+T(x,z)=(1+x)A(x,z({1+x})).
\end{equation}
Using~(\ref{TxzAxz}), we immediately obtain the following result.
\begin{theorem}
For $n\geq 1$, we have
\begin{equation}\label{TnxAnx}
T_n(x)=(1+x)^{n+1}A_n(x).
\end{equation}
\end{theorem}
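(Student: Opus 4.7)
The plan is to extract the coefficient of $z^n/n!$ from both sides of the generating function identity~(\ref{TxzAxz}) and match them. Since the hard analytic work (deriving the closed forms for $T(x,z)$ and $A(x,z)$ and verifying the identity $x+T(x,z)=(1+x)A(x,z(1+x))$) has already been done above, the proposition reduces to routine coefficient extraction. This is the kind of step where there is no genuine obstacle; the main thing to be careful about is that the substitution $z \mapsto z(1+x)$ on the right-hand side produces a factor of $(1+x)^n$ when we extract the coefficient of $z^n$, which combines with the leading $(1+x)$ to give $(1+x)^{n+1}$.

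More explicitly, I would expand the left-hand side of~(\ref{TxzAxz}) as
\begin{equation*}
x + T(x,z) = (1+x) + \sum_{n\geq 1} T_n(x)\,\frac{z^n}{n!},
\end{equation*}
using the definition of $T(x,z)$, so that for each $n\geq 1$ the coefficient of $z^n/n!$ is simply $T_n(x)$. On the right-hand side, I would expand $A(x,z(1+x))$ using the definition of $A(x,z)$:
\begin{equation*}
(1+x)A(x,z(1+x)) = (1+x) + \sum_{n\geq 1} (1+x)\,A_n(x)\,(1+x)^n\,\frac{z^n}{n!} = (1+x) + \sum_{n\geq 1}(1+x)^{n+1}A_n(x)\,\frac{z^n}{n!}.
\end{equation*}

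Matching the constant terms gives $1+x = 1+x$, which is a consistency check, and matching the coefficient of $z^n/n!$ for $n\geq 1$ gives exactly the claimed identity $T_n(x) = (1+x)^{n+1}A_n(x)$. Since both series have infinite radius of convergence as formal power series in $z$ with polynomial coefficients in $x$, comparison of coefficients is justified, and there is no serious obstacle to completing this argument.
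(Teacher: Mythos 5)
Your proposal is correct and matches the paper's approach: the paper likewise derives the identity by noting it follows immediately from comparing coefficients of $z^n/n!$ in the generating function identity $x+T(x,z)=(1+x)A(x,z(1+x))$. You have simply spelled out the coefficient extraction that the paper leaves implicit.
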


Motivated by the connections between $W_n(x)$ and $W_n^{\textit{l}}(x)$, in the following sections we will study the properties of the polynomials $R_n(x)$.
%%%%%%%%%%%%%%%%%%%%%%%%%%%%%%%%%%%%%%%%%%%
%%%%%%%%%%%%%%%%%%%%%%%%%%%%%%%%%%%%%%%%%%%
%%%%%%%%%%%%%%%%%%%%%%%%%%%%%%%%%%%%%%%%%%%
\section{Explicit formulas}
%\hspace*{\parindent}
%%%%%%%%%%%%%%%%%%%%%%%%%%%%%%%%%%%%%%%%%%
The goal of this section is to find an explicit formula for the polynomials $R_n(x)$. Let $$P(x,z)=\sum_{n\geq 0}R_n(x)\frac{z^n}{n!}.$$
Multiplying both sides of~(\ref{recurrence-2}) by $x^n/n!$ and summing over all values of $n$,
we get that $P(x,z)$ satisfies the following
partial differential equation:
\begin{equation}\label{Pxz}
x(x^2-1)\frac{\partial P(x,z)}{\partial x}+(1-x^2z)\frac{\partial P(x,z)}{\partial z}=P(x,z)+x.
\end{equation}
It is well known~\cite[\textsf{A008303, A008971}]{Sloane} that
\begin{eqnarray*}
% \nonumber to remove numbering (before each equation)
  W(x,z) &=& \sum_{n\geq 1}W_n(x)\frac{z^n}{n!}\\
   &=& \frac{\sinh(z\sqrt{1-x})}{\sqrt{1-x}\cosh(z\sqrt{1-x})-\sinh(z\sqrt{1-x})}
\end{eqnarray*}
and
\begin{eqnarray*}
% \nonumber to remove numbering (before each equation)
  W^{\textit{l}}(x,z) &=&1+ \sum_{n\geq 1}W_n^{\textit{l}}(x)\frac{z^n}{n!}\\
   &=& \frac{\sqrt{1-x}}{\sqrt{1-x}\cosh(z\sqrt{1-x})-\sinh(z\sqrt{1-x})}
\end{eqnarray*}
Using~(\ref{interwin}), we get
$$P(x,z)= 1+\sum_{n\geq 1}[x W_n(x^2)+W_n^{\textit{l}}(x^2)]\frac{z^n}{n!}=xW(x^2,z)+W^{\textit{l}}(x^2,z).$$
Thus
\begin{equation}\label{PxzGF}
P(x,z)=\frac{x\sinh{(z\sqrt{1-x^2})}+\sqrt{1-x^2}}{\sqrt{1-x^2}\cosh{(z\sqrt{1-x^2})}-\sinh(z\sqrt{1-x^2})},
\end{equation}
and this formula gives a solution to the partial differential equation~(\ref{Pxz}).
It should be noted that the
generating function of the polynomials $R_{n}(x)$ has also been studied by
Charalambos~\cite[p.~542]{Charalambos02}.
Recall that the hyperbolic cosine and the inverse of the hyperbolic cosine are defined respectively by
$$\cosh z=\frac{e^z+e^{-z}}{2} \quad\textrm{and}\quad  \text{arc$\cosh$}\, (x)=\ln (x+\sqrt{x^2-1})\quad {\text for}\quad x\geq 1.$$
Let $$R(x,t)=\sum_{n\geq0}R_{n+1}(x)\frac{t^{n}}{n!}.$$
Clearly, $$\frac{\partial P(x,t)}{\partial t}=R(x,t).$$
By {\it the method of characteristics} (see~\cite{Wilf} for instance), Charalambos~\cite[p.~542]{Charalambos02} obtained a two-variable generating function
\begin{equation}\label{GF}
R(x,t)=\frac{1-x^2}{x(\cosh z-1)},
\end{equation}
with $z=-t\sqrt{1-x^2}+\text{arc$\cosh$}\, (1/x)$. Comparing~(\ref{PxzGF}) with~(\ref{GF}), the latter is simpler. To obtain an explicit formula for $R_n(x)$, we will use~(\ref{GF}).
Let $\{x_i\}_{i\geq 1}$ be a sequence of variables.
The {\it partial Bell polynomials} $B_{n,k}=:B_{n,k}(x_1,x_2,\ldots,x_{n-k+1})$ are defined by the generating function
\begin{equation}\label{Bell}
\sum_{n\geq k}B_{n,k}\frac{t^n}{n!}=\frac{1}{k!}\left(\sum_{i\geq1}x_i\frac{t^i}{i!}\right)^{k},
\end{equation}
with $B_{0,0}=1$ and $B_{n,0}=0$ for $n>0$ (see~\cite[p.~133]{Comtet74}).

\begin{proposition}\label{prop-1}
Let $B_{n,k}$ be the partial Bell polynomials.
When $x_i=\left(1-x^2\right)^{\lrf{(i-1)/2}}$ for each $i\geq 1$, we have
\begin{equation}\label{TnxExp}
R_{n+1}(x)=\sum_{k=1}^n(-1)^{n-k}k!(1+x)^{k+1}B_{n,k}.
\end{equation}
\end{proposition}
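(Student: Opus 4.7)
The plan is to verify (\ref{TnxExp}) at the level of exponential generating functions. Since $R(x,t) = \sum_{n\geq 0}R_{n+1}(x)t^n/n!$ is already in closed form by (\ref{GF}), it is enough to compute the EGF of the right-hand side of (\ref{TnxExp}) and match the two.

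First I would identify the EGF $f(t) := \sum_{i\geq 1}x_i t^i/i!$ of the prescribed sequence $x_i = (1-x^2)^{\lfloor(i-1)/2\rfloor}$. Setting $a = \sqrt{1-x^2}$ and splitting by parity of $i$ (odd indices $2j+1$ and even indices $2j+2$ both carry $x_i = a^{2j}$), one obtains
\[
f(t) \;=\; \frac{\sinh(at)}{a} + \frac{\cosh(at)-1}{a^2}.
\]
By the defining identity (\ref{Bell}), $\sum_{n\geq k}B_{n,k}t^n/n! = f(t)^k/k!$, and the relation $(-1)^{n-k}t^n = (-1)^k(-t)^n$ yields $\sum_{n\geq k}(-1)^{n-k}B_{n,k}t^n/n! = (-f(-t))^k/k!$. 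Multiplying (\ref{TnxExp}) by $t^n/n!$, summing over $n\geq 1$, and exchanging the order of summation converts the right-hand side into the geometric series
\[
\sum_{k\geq 1}(1+x)^{k+1}(-f(-t))^k \;=\; \frac{-(1+x)^2 f(-t)}{1+(1+x)f(-t)}.
\]
Adjoining the missing $n=0$ contribution $R_1(x)=1+x$ then collapses this to the compact expression
\[
G(t) \;=\; \frac{1+x}{1+(1+x)f(-t)}.
\]

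It remains to check $G(t) = R(x,t)$. Using $a^2-(1+x) = -x(1+x)$, the denominator $1+(1+x)f(-t)$ simplifies to $(1+x)[\cosh(at)-a\sinh(at)-x]/a^2$, so $G(t) = a^2/[\cosh(at)-a\sinh(at)-x]$. On the other hand, writing $z_0 = \text{arccosh}(1/x)$ so that $\cosh z_0 = 1/x$ and $\sinh z_0 = a/x$, the hyperbolic subtraction formula applied to $z = z_0 - at$ puts the expression (\ref{GF}) in exactly the same form, and the equality $R(x,t)=G(t)$ follows.

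The main obstacle is not conceptual but bookkeeping: correctly handling the signs when substituting $t \mapsto -t$ and the factor $(-1)^{n-k}$ inside the Bell-series identity, and performing the hyperbolic simplification that strips the $\text{arccosh}$ from (\ref{GF}). Both reduce to one-line applications of $\cosh^2-\sinh^2=1$ and the addition formula for $\cosh$; no new combinatorial input beyond (\ref{Bell}) and (\ref{GF}) is required.
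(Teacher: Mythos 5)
Your proposal is correct and takes essentially the same route as the paper, just run in the opposite direction: the paper expands the denominator $x(\cosh z-1)$ of (\ref{GF}) into the power series $1-x+\sum_{i\geq1}(-1)^i(1-x^2)^{\lrf{(i+1)/2}}\,t^i/i!$ and then reads off the geometric-series/Bell expansion, whereas you sum the geometric-series/Bell side into the closed form $(1+x)/\bigl(1+(1+x)f(-t)\bigr)$ and match it against (\ref{GF}) via the hyperbolic addition formula. The underlying identity---that $x(\cosh z-1)=\cosh(t\sqrt{1-x^2})-\sqrt{1-x^2}\,\sinh(t\sqrt{1-x^2})-x$, whose Taylor coefficients are precisely the $(-1)^i x_i$ up to a factor of $1-x^2$---is the same in both arguments, so no new content is added or missing.
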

\begin{proof}
%Recall that
%$$\cosh z=\frac{e^z+e^{-z}}{2} \quad\textrm{and}\quad  e^{\text{arc$\cosh$}\,x}=x+\sqrt{x^2-1}.$$
Let $z=-t\sqrt{1-x^2}+\text{arc$\cosh$}\, (1/x)$. Note that
\begin{eqnarray*}
% \nonumber to remove numbering (before each equation)
  x(\frac{e^z+e^{-z}}{2}-1) &=& \frac{e^{-t\sqrt{1-x^2}}+e^{t\sqrt{1-x^2}}}{2}+\sqrt{1-x^2}\frac{e^{-t\sqrt{1-x^2}}-e^{t\sqrt{1-x^2}}}{2}-x\\
    &=& 1-x+\sum_{i\geq 1}(-1)^iy_i\frac{t^i}{i!},
\end{eqnarray*}
where $y_i=\left(1-x^2\right)^{{\lrf{(i+1)/2}}}$.
Thus
\begin{eqnarray*}
% \nonumber to remove numbering (before each equation)
  R(x,t) &=& \frac{1-x^2}{1-x+\sum_{i\geq 1}(-1)^iy_i\frac{t^i}{i!}} \\
         &=& \frac{1}{\frac{1}{1+x}+\sum_{i\geq 1}(-1)^ix_i\frac{t^i}{i!}},\\
         &=& \frac{1+x}{1+(1+x)\sum_{i\geq 1}(-1)^ix_i\frac{t^i}{i!}},\\
\end{eqnarray*}
where $x_i=\left(1-x^2\right)^{{\lrf{(i-1)/2}}}$.
Then the desired result follows immediately from the formula for the geometric series and~(\ref{Bell}).
\end{proof}

Here we provide an example for illustration of Proposition~\ref{prop-1}.
\begin{example}
Consider the case $n=4$, we have $$B_{4,1}=x_4,B_{4,2}=4x_1x_3+3x_2^2,B_{4,3}=6x_1^2x_2,B_{4,4}=x_1^4$$ (see~\cite[p.~307]{Comtet74} for instance).
When $x_i=\left(1-x^2\right)^{{\lrf{(i-1)/2}}}$ for each $i\geq 1$, we get
$$B_{4,1}=1-x^2,B_{4,2}=7-4x^2,B_{4,3}=6,B_{4,4}=1.$$ Thus
$$\sum_{k=1}^4(-1)^{4-k}k!(1+x)^{k+1}B_{4,k}=1+16x+58x^2+88x^3+61x^4+16x^5.$$
\end{example}

Recall that $$B_{n,k}(1,1,1,\ldots)=S(n,k),$$ where $S(n,k)$ is a {\it Stirling
number of the second kind} (see~\cite[p.~135]{Comtet74}). Hence when $x=0$, the formula~(\ref{TnxExp}) reduces to
$$1=\sum_{k=0}^n(-1)^{n-k}k!S(n,k).$$
Moreover, when $x=1$, the formula~(\ref{TnxExp}) reduces to
\begin{equation}
(n+1)!=\sum_{k=1}^n(-1)^{n-k}k!2^{k}B_{n,k}(1,1,0,0,\ldots,0).
\end{equation}

From~(\ref{TnxExp}), we observe that $R_{n}(x)$ is divisible by $(1+x)^2$ for $n\geq 2$.
It is well known that the Eulerian polynomial $A_n(x)$ has only real nonpositive simple zeros (see~\cite[Theorem 1.33]{Bona04}). In the next Section, we will show that the polynomial $R_n(x)$ also has only real zeros.
%%%%%%%%%%%%%%%%%%%%%%%%%%%%%%%%%%%%%%%%%%%
\section{Central and local limit theorems}
%\hspace*{\parindent}
%%%%%%%%%%%%%%%%%%%%%%%%%%%%%%%%%%%%%%%%%%
Let $\sgn$ denote the sign function defined on $\mathbb{R}$, i.e.,
\begin{equation*}
\sgn x=\begin{cases}
+1 & \text{if $x>0$},\\ 0  & \text{if
$x=0$},\\ -1&\text{if
$x<0$.}
\end{cases}
\end{equation*}
Let $\rz$ denote the set of real polynomials with only real zeros.
Furthermore, denote by $\rz(I)$ the set of such polynomials all
whose zeros are in the interval $I$. Suppose that $f,F\in\rz$. Let
$\{r_i\}$ and $\{s_j\}$ be all zeros of $f$ and $F$ in nonincreasing
order respectively. We say that $f$ {\it separates} $F$, denoted by
$f\sep F$, if $\deg f\le\deg F\le\deg f+1$ and
\begin{equation*}
s_1\ge r_1\ge s_2\ge r_2\ge s_3\ge r_3\ge\cdots.
%\le\cdots\le r_3\le s_3\le r_2\le s_2\le r_1\le s_1.
\end{equation*}
It is well known that if $f\in\rz$, then $f'\in\rz$ and $f'\sep f$.

\begin{theorem}\label{mthm-1}
For $n\ge 1$, we have
$R_n(x)\in\rz[-1,0)$ and $R_n(x)\sep R_{n+1}(x)$. More
precisely, $R_n(x)$ has $\lrc{{n}/{2}}-1$ simple zeros, and the zero $x=-1$ with the multiplicity
$\lrf{{n}/{2}}+1$.
\end{theorem}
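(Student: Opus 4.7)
My plan is to induct on $n$, with $n=1,2,3$ checked directly from the table (e.g.\ $R_3(x)=(1+x)^2(1+2x)$). Suppose the theorem holds for $R_n$, and factor $R_n(x)=(1+x)^{m_n}Q_n(x)$, where $m_n=\lrf{n/2}+1$, $Q_n(-1)\ne 0$, and the simple zeros of $Q_n$ are $0>y_1>y_2>\cdots>y_{s_n}>-1$ with $s_n=\lrc{n/2}-1$.

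To pin down the multiplicity of $R_{n+1}$ at $-1$, I would substitute this factorization into~(\ref{recurrence-2}), obtaining
\begin{equation*}
R_{n+1}(x)=(1+x)^{m_n}\bigl[A_n(x)Q_n(x)+x(1-x)(1+x)Q_n'(x)\bigr],\qquad A_n(x):=1+m_nx+(n-m_n)x^2.
\end{equation*}
The bracket at $x=-1$ equals $(n+1-2m_n)Q_n(-1)$, which is $-Q_n(-1)\ne 0$ for $n$ even (so the multiplicity at $-1$ stays at $m_n=m_{n+1}$), and vanishes for $n$ odd. In the odd case the identity $A_n(x)=\tfrac12(1+x)(2+(n-1)x)$ yields an extra factor of $1+x$, so $(1+x)^{m_n+1}=(1+x)^{m_{n+1}}$ divides $R_{n+1}$.

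For the simple zeros in $(-1,0)$, the recurrence gives $R_{n+1}(y_i)=y_i(1-y_i^2)R_n'(y_i)$. The prefactor is negative on $(-1,0)$, while the simplicity of the $y_i$'s combined with $R_n(0)=1>0$ forces $\sgn R_n'(y_i)=(-1)^{i-1}$, so $\sgn R_{n+1}(y_i)=(-1)^i$; together with $R_{n+1}(0)=1>0$ the intermediate value theorem then produces a zero $z_i\in(y_i,y_{i-1})$ of $R_{n+1}$ for every $i=1,\dots,s_n$ (with $y_0:=0$). When $n$ is even one extra zero in $(-1,y_{s_n})$ is needed: the factorization shows $R_{n+1}(x)\sim -Q_n(-1)(1+x)^{m_n}$ as $x\to -1^+$, so the sign just to the right of $-1$ is $-\sgn Q_n(-1)=(-1)^{s_n+1}$, where $\sgn Q_n(-1)=(-1)^{s_n}$ is the sign of $R_n$ on $(-1,y_{s_n})$ (itself a consequence of $R_n(0)>0$ and the sign alternation of $R_n$). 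This is opposite to $\sgn R_{n+1}(y_{s_n})=(-1)^{s_n}$, which forces the extra zero.

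A degree count then closes the argument: we have exhibited $s_{n+1}$ distinct simple zeros in $(-1,0)$ along with the divisibility $(1+x)^{m_{n+1}}\mid R_{n+1}$, accounting for all $s_{n+1}+m_{n+1}=n+1$ zeros of $R_{n+1}$, so both counts are sharp. The strict interlacing $y_{i-1}>z_i>y_i$ together with the extra zero in the even case (or the extra copy of $-1$ in the odd case) yields $R_n\sep R_{n+1}$. The principal obstacle is the parity-dependent behavior at $x=-1$; carrying the sign identity $\sgn Q_n(-1)=(-1)^{s_n}$ through the induction is what makes the even-case extra zero and the odd-case extra $(1+x)$ appear at exactly the right moment so that the degree count closes.
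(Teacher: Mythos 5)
Your proof is correct and follows essentially the same route as the paper's: induction on $n$, extraction of the factor $(1+x)^{\lrf{n/2}+1}$, sign alternation of $R_{n+1}$ at the simple zeros of $R_n$ via the recurrence, a parity analysis at $x=-1$ (where the paper's quantity $1+n-2m$ is exactly your $A_n(-1)$), and a final degree count. Your explicit factorization $A_n(x)=\tfrac12(1+x)(2+(n-1)x)$ for odd $n$ is a slightly more transparent way of seeing the extra factor of $1+x$ than the paper's appeal to $G(-1)=0$, but it is the same argument.
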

\begin{proof}
From~(\ref{recurrence-2}), we have
$\deg R_{n+1}(x) =\deg R_{n}(x)+1$ and $R_{n}(0)=1$.
Note that $R_1(x)=1+x$, $R_2(x)=(1+x)^2$, $R_3(x)=(1+x)^2(1+2x)$ and
$R_4(x)=(1+x)^3(1+5x)$.
So it suffices to consider the case $n\geq 4$. We proceed by
induction on $n$. Let $m=\lrf{{n}/{2}}+1$.
Suppose that $$R_{n}(x)=\prod_{i=1}^k(x-r_i)(x+1)^m,$$ where $0>r_1>r_2>\ldots>r_k>-1$.

Let $g(x)=\prod_{i=1}^k(x-r_i)$ and $G(x)=R_{n+1}(x)/(x+1)^m$.
Then $\deg G(x)=\deg g(x)+1$. By~(\ref{recurrence-2}), we have
\begin{equation*}
G(x)=(1+nx^2)g(x)+x(1-x^2)\sum_{j=1}^k\frac{g(x)}{x-r_j}+mx(1-x)g(x).
\end{equation*}
Note that $\sgn G(r_i)=(-1)^i$ for $1\leq i\leq k$ and $G(-1)=(1+n-2m)g(-1)$.
Hence $G(x)$ has precisely one zero in each of $k$ intervals $(r_k,r_{k-1}),\ldots,(r_2,r_1),(r_1,0)$.
Note that
\begin{equation*}
1+n-2m=n-2\lrf{n/2}-1=\begin{cases}
0 & \text{if $n$ is odd};\\ -1  & \text{if
$n$ is even}.
\end{cases}
\end{equation*}
Thus $-1$ is a simple zero of $G(x)$ if $n$ is odd. If $n$ is even, then $\sgn G(-1)=(-1)^{k+1}$ and $G(x)$ has precisely one zero in the interval $(-1,r_k)$.
Hence $R_{n+1}(x)\in\rz[-1,0)$ and $R_n(x)\sep R_{n+1}(x)$.
This completes the proof.
\end{proof}

For $n\geq 1$,
set
\begin{equation}\label{RnxGnx}
R_n(x)=(1+x)^{\lrf{{n}/{2}}+1}G_n(x).
\end{equation}
Then the polynomial $G_n(x)$ has only positive integer coefficients.
\begin{problem}
Is the similarity between~(\ref{TnxAnx}) and~(\ref{RnxGnx}) just a coincidence?
Can either equation be given a combinatorial interpretation?
\end{problem}

Let $\{a(n,k)\}_{0\leq k\leq n}$ be a sequence of positive real numbers. It has no {\it internal zeros} if there exist no indices $i<j<k$ with $a(n,i)a(n,k)\neq0$ but $a(n,j)=0$.
Let $A_n=\sum_{k=0}^na(n,k)$. We say the sequence $\{a(n,k)\}$ satisfies a central limit theorem with mean $\mu_n$ and variance $\sigma_n^2$ provided
$$\limsup_{n\rightarrow+\infty,x\in\mathbb{R}}\left|\sum_{k=0}^{\mu_n+x\sigma_n}\frac{a(n,k)}{A_n} -\frac{1}{\sqrt{2\pi}}\int_{-\infty}^xe^{-\frac{t^2}{2}}dt\right|=0.$$
The sequence satisfies a local limit theorem on $B\in\mathbb{R}$ if
$$\limsup_{n\rightarrow+\infty,x\in B}\left|\frac{\sigma_na(n,\mu_n+x\sigma_n)}{A_n} -\frac{1}{\sqrt{2\pi}}e^{-\frac{x^2}{2}}\right|=0.$$
Recall the following Bender's theorem.
\begin{theorem}\cite{Bender73}\label{bender}
Let $\{P_n\}_{n\geq1}$ be a sequence of polynomials with only real zeros. The sequence of the coefficients of $P_n$ satisfies a central limit theorem with $$\mu_n=\frac{P_n'(1)}{P_n(1)} \quad\textrm{and}\quad
\sigma_n^2=\frac{P_n'(1)}{P_n(1)}+\frac{P_n''(1)}{P_n(1)}-\left(\frac{P_n'(1)}{P_n(1)}\right)^2,$$
provided that $\lim\limits_{n\to\infty}\sigma_n^2=+\infty$.
If the sequence of coefficients of each $P_n(x)$ has no internal zeros, then the sequence of coefficients satisfies a local limit theorem.
\end{theorem}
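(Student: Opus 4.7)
The plan is to reduce the theorem to the classical central and local limit theorems for sums of independent non-identically distributed Bernoulli random variables, via the Harper--Pitman factorization. Since the coefficients $a(n,k)$ are nonnegative and $P_n$ has only real zeros, every zero is nonpositive, so writing the zeros as $-a_{n,i}$ with $a_{n,i}\ge 0$ gives
\begin{equation*}
\frac{P_n(x)}{P_n(1)}=\prod_{i=1}^{d_n}\frac{x+a_{n,i}}{1+a_{n,i}}=\prod_{i=1}^{d_n}\bigl((1-p_{n,i})+p_{n,i}\,x\bigr),\qquad p_{n,i}:=\frac{1}{1+a_{n,i}}\in(0,1].
\end{equation*}
The right-hand side is the probability generating function of $X_n:=\sum_{i=1}^{d_n}Y_{n,i}$, where the $Y_{n,i}$ are independent Bernoulli variables with $\Pr(Y_{n,i}=1)=p_{n,i}$, so $\Pr(X_n=k)=a(n,k)/A_n$. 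Differentiating $P_n(x)/P_n(1)$ at $x=1$ identifies $\mu_n=\mathbb E X_n=\sum_i p_{n,i}=P_n'(1)/P_n(1)$ and $\sigma_n^2=\operatorname{Var} X_n=\sum_i p_{n,i}(1-p_{n,i})$, which matches the expression given.

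For the central limit theorem I would apply Lyapunov's CLT. Each summand satisfies $|Y_{n,i}-p_{n,i}|\le 1$, hence $\mathbb E|Y_{n,i}-p_{n,i}|^3\le p_{n,i}(1-p_{n,i})$, so the Lyapunov ratio obeys
\begin{equation*}
\frac{1}{\sigma_n^3}\sum_{i=1}^{d_n}\mathbb E|Y_{n,i}-p_{n,i}|^3\le\frac{\sigma_n^2}{\sigma_n^3}=\frac{1}{\sigma_n}\longrightarrow 0
\end{equation*}
under $\sigma_n^2\to\infty$. This gives convergence in distribution of $(X_n-\mu_n)/\sigma_n$ to a standard normal, which upgrades to uniform convergence of the CDFs because the limiting distribution is continuous; this is exactly the stated central limit assertion.

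For the local limit theorem I would use Fourier inversion. The characteristic function $\phi_n(t)=\prod_i(1-p_{n,i}+p_{n,i}e^{it})$ satisfies
\begin{equation*}
|\phi_n(t)|^2\le \exp\!\bigl(-4\sigma_n^2\sin^2(t/2)\bigr),
\end{equation*}
using $|1-p+pe^{it}|^2=1-4p(1-p)\sin^2(t/2)$ and $1-u\le e^{-u}$. Starting from $\Pr(X_n=k)=\frac{1}{2\pi}\int_{-\pi}^{\pi}e^{-itk}\phi_n(t)\,dt$ and the analogous Fourier representation of the Gaussian density, I would split $[-\pi,\pi]$ into three zones: (i) $|t|\le T/\sigma_n$, where a Taylor expansion of $\log\phi_n(t)$ around $0$ yields a Gaussian approximation with error $o(1)$ after the usual rescaling; (ii) $T/\sigma_n\le|t|\le\delta$, where the exponential bound above forces a contribution of order $e^{-cT^2}$; (iii) $\delta\le|t|\le\pi$, where $\sin^2(t/2)$ is bounded below and the same exponential bound gives $|\phi_n(t)|\le e^{-c''\sigma_n^2}$. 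Summing the three pieces yields
\begin{equation*}
\sigma_n\Pr(X_n=k)=\frac{1}{\sqrt{2\pi}}\,e^{-(k-\mu_n)^2/(2\sigma_n^2)}+o(1),
\end{equation*}
uniformly in $k$, which is the stated local limit theorem. The no-internal-zeros hypothesis enters in zone (iii): it guarantees that the support of $X_n$ is not contained in a proper sublattice of $\mathbb Z$, so that $|\phi_n(t)|$ stays uniformly bounded away from $1$ throughout $[\delta,\pi]$ and no secondary peak of $\phi_n$ at $t=\pm\pi$ spoils the exponential decay.

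The main obstacle is the middle zone (ii) and its interface with zone (i): one must quantify, uniformly in $n$ and $t$, how close $\phi_n(t)$ is to $e^{it\mu_n-\sigma_n^2 t^2/2}$ when the probabilities $p_{n,i}$ are highly non-uniform, and track the cubic error from $\log(1-p+pe^{it})$ carefully enough to control the $L^1$ norm of $\phi_n(t)-\widehat{g}_n(t)$. In practice I would invoke an Esseen-type smoothing inequality to merge zones (i) and (ii), reducing the task to a single uniform Gaussian-type upper bound on $|\phi_n(t)|$ on a window whose width is governed by $\max_i p_{n,i}(1-p_{n,i})$, and then dispose of zone (iii) by the lattice-span argument above.
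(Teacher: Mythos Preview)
The paper does not give a proof of this statement: it is quoted from Bender (1973) and then applied as a black box to the polynomials $R_n(x)$ in the next theorem. So there is no ``paper's own proof'' to compare your sketch against.

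Your outline is the standard Harper--Bender argument and is essentially sound for both the central and local parts. One inaccuracy worth flagging: your explanation of where the no-internal-zeros hypothesis enters is off. In zone~(iii) the inequality $|\phi_n(t)|^2\le\exp\bigl(-4\sigma_n^2\sin^2(t/2)\bigr)$ that you already derived forces $|\phi_n(t)|\le e^{-c\sigma_n^2}$ uniformly on $\delta\le|t|\le\pi$, with no further assumption; the Bernoulli-product structure by itself rules out any secondary peak of $|\phi_n|$ at $t=\pm\pi$. In fact a polynomial with nonnegative coefficients and only real (hence nonpositive) zeros automatically has log-concave, and therefore internally nonzero, coefficients, so in the real-rooted setting stated here the hypothesis is redundant. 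In Bender's original formulation the condition does real work because his framework also covers sequences where real-rootedness is not assumed; the version recorded in this paper is a specialization in which that extra clause could have been dropped.
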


Combining Theorem~\ref{mthm-1} and Theorem~\ref{bender}, we obtain the following result.
\begin{theorem}\label{mthm-2}
The sequence $\{R_{n,k}\}_{0\leq k\leq n}$ satisfies both a central limit theorem and a local
local limit theorem with
$\mu_n={(2n-1)}/{3}$ and $\sigma_n^2=(8n+8)/45,$ where $n\geq 4$.
\end{theorem}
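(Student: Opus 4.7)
The plan is to apply Bender's theorem (Theorem~\ref{bender}) to the polynomial sequence $\{R_n(x)\}$, whose real-rootedness is already established in Theorem~\ref{mthm-1}. Two ingredients remain: (i) verify that the coefficient sequence $\{R_{n,k}\}_{0\le k\le n}$ has no internal zeros and $\sigma_n^2\to\infty$, and (ii) compute $R_n(1)$, $R_n'(1)$ and $R_n''(1)$ in closed form so the stated formulas for $\mu_n$ and $\sigma_n^2$ fall out from Bender's expressions.

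The non-internal-zeros property is immediate. By~(\ref{Trirelation}), $R_{n,k}$ is either some $W_{n,j}$ or some $W_{n,j}^{\textit{l}}$, and each of these counts permutations with a given number of interior or left peaks. Since an appropriate peak count is realized for every $j$ in the relevant range, every coefficient $R_{n,k}$ with $0\le k\le n$ is strictly positive (one can also see this directly from the recursion~(\ref{recurrence-1}) by induction). Hence there are no internal zeros at all, which simultaneously lets us invoke the local limit theorem in Bender's statement.

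For the moments, I would differentiate the recurrence~(\ref{recurrence-2}) and evaluate at $x=1$. Setting $x=1$ in~(\ref{recurrence-2}) gives $R_{n+1}(1)=(n+1)R_n(1)$, so $R_n(1)=2\,n!$ (consistent with the remark after~(\ref{interwin})). Differentiating once yields
\[
R_{n+1}'(x)=2nxR_n(x)+\bigl(2+(n-3)x^2\bigr)R_n'(x)+x(1-x^2)R_n''(x),
\]
which at $x=1$ becomes $R_{n+1}'(1)=(n-1)R_n'(1)+2n\cdot R_n(1)$. Substituting $R_n(1)=2n!$, an easy induction (checking the base case $R_2'(1)=4$) gives
\[
R_n'(1)=\frac{2(2n-1)}{3}\,n!,\qquad \mu_n=\frac{R_n'(1)}{R_n(1)}=\frac{2n-1}{3}.
\]
Differentiating once more and evaluating at $x=1$ produces
\[
R_{n+1}''(1)=(n-3)R_n''(1)+(4n-6)R_n'(1)+2n\,R_n(1),
\]
and feeding in the known expressions for $R_n(1)$ and $R_n'(1)$ I would verify by induction (anchored at $n=4$, where $R_4''(1)=192$) that
\[
R_n''(1)=\frac{4(10n^2-21n+14)}{45}\,n!.
\]
Bender's variance formula then gives
\[
\sigma_n^2=\mu_n+\frac{R_n''(1)}{R_n(1)}-\mu_n^2=\frac{2n-1}{3}+\frac{2(10n^2-21n+14)}{45}-\frac{(2n-1)^2}{9}=\frac{8n+8}{45},
\]
after collecting over the common denominator~$45$; in particular $\sigma_n^2\to\infty$.

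The main obstacle is really just the bookkeeping in the second differentiation and the subsequent induction for $R_n''(1)$; the inductive step reduces, after clearing denominators, to the polynomial identity
\[
10(n+1)^2-21(n+1)+14=(n-3)(10n^2-21n+14)/(n+1)+\text{(lower-order correction)},
\]
which I would verify as a routine algebraic check. Once the three evaluations $R_n(1)$, $R_n'(1)$, $R_n''(1)$ are in hand, combining the real-rootedness from Theorem~\ref{mthm-1}, the positivity (hence no internal zeros) of the coefficients, and the divergence of $\sigma_n^2$ allows a direct appeal to Theorem~\ref{bender} to conclude both the central and local limit theorems stated.
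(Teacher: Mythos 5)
Your proposal is correct and follows essentially the same route as the paper: differentiate the recurrence~(\ref{recurrence-2}) once and twice, evaluate at $x=1$ to get linear recurrences for $R_n'(1)$ and $R_n''(1)$, solve them (your closed forms agree with the paper's $x_n=(4n-2)n!/3$ and $y_n=n!(40n^2-84n+56)/45$), and feed the results into Bender's formulas. The only difference is cosmetic --- you verify the no-internal-zeros hypothesis explicitly and solve the second recurrence by induction rather than by the quadratic ansatz $y_n=(an^2+bn+c)n!$, which if anything makes the argument slightly more complete than the paper's.
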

\begin{proof}
By differentiating~(\ref{recurrence-2}), we obtain the recurrence
$$x_{n+1}=(4n)n!+(n-1)x_n$$
for $x_n=R_n'(1)$, and this has the solution
$x_n=(4n-2)n!/3$ for $n\geq 2
$. By Theorem~\ref{bender}, we have $\mu_n={(2n-1)}/{3}$.
Another differentiation leads to the recurrence
$$y_{n+1}=\frac{4}{3}n!(4n^2-5n+3)+(n-3)y_n$$
for $y_n=R_n''(1)$. Set $y_n=(an^2+bn+c)n!$ and solve for $a,b,c$ to get
$y_n={n!}(40n^2-84n+56)/{45}$
for $n\geq 4$.
Hence $\sigma_n^2=(8n+8)/45,$ where $n\geq 4$.
Thus $\lim\limits_{n\to\infty}\sigma_n^2=+\infty$ as desired.
\end{proof}

Let $P(x)=\sum_{i=0}^na_ix^i$ be a polynomial. Let $m$ be an index such that $a_m=\max_{0\leq i\leq n}a_i$.
Darroch~\cite{Darroch64} showed that if $P(x)\in\rz(-\infty,0]$, then
$$\left\lfloor{\frac{P_n'(1)}{P_n(1)}}\right\rfloor\leq m\leq \left\lceil{\frac{P_n'(1)}{P_n(1)}}\right\rceil.$$
So the following result is immediate.
\begin{corollary}
Let $i$ be an index such that $R_{n,i}=\max_{0\leq k\leq n}R_{n,k}$.
If ${(2n-1)}/{3}$ is an integer, then $i={(2n-1)}/{3}$;
Otherwise, $i={\lrf{(2n-1)}/{3}}$ or $i={\lrc{(2n-1)}/{3}}$.
\end{corollary}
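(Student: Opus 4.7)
The plan is to obtain this corollary as an essentially immediate consequence of Darroch's inequality applied to $R_n(x)$. Two ingredients are required and both are already at our disposal. First, Theorem~\ref{mthm-1} gives $R_n(x)\in\rz[-1,0)\subset\rz(-\infty,0]$, which is exactly the hypothesis Darroch's theorem demands. Second, one needs the value of the ratio $R_n'(1)/R_n(1)$ in closed form.

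For the second ingredient I would simply quote the computation already performed inside the proof of Theorem~\ref{mthm-2}: solving the recurrence $x_{n+1}=(4n)\,n!+(n-1)x_n$ for $x_n=R_n'(1)$ gives $R_n'(1)=(4n-2)n!/3$, while the evaluation $R_n(1)=\sum_{k=0}^nR_{n,k}=2\cdot n!$ was already recorded just below~(\ref{interwin}). Dividing these two identities yields $R_n'(1)/R_n(1)=(2n-1)/3$.

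With these facts in hand, Darroch's bound $\lrf{R_n'(1)/R_n(1)}\le i\le\lrc{R_n'(1)/R_n(1)}$ rewrites as $\lrf{(2n-1)/3}\le i\le\lrc{(2n-1)/3}$. When $(2n-1)/3$ is an integer, floor and ceiling collapse to the same value and $i$ is forced to equal $(2n-1)/3$; otherwise the ceiling exceeds the floor by exactly one, so $i$ is pinned to one of the two consecutive integers bracketing $(2n-1)/3$, which is precisely the dichotomy in the statement. There is no genuine obstacle to overcome: all the substantive work has already been done in Theorems~\ref{mthm-1} and~\ref{mthm-2}, and the corollary is just the clean readout of Darroch's inequality at the mean $\mu_n=(2n-1)/3$ computed there.
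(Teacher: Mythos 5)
Your proposal is correct and follows exactly the route the paper intends: the corollary is stated as an immediate consequence of Darroch's inequality, using the real-rootedness of $R_n(x)$ from Theorem~\ref{mthm-1} and the value $R_n'(1)/R_n(1)=\mu_n=(2n-1)/3$ computed in the proof of Theorem~\ref{mthm-2}. Your explicit verification via $R_n(1)=2\cdot n!$ and $R_n'(1)=(4n-2)n!/3$ is just a spelled-out version of what the paper leaves implicit.
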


%%%%%%%%%%%%%%%%%%%%%%%%%%%%%%%%%%%%%%%%%%%%
%%%%%%%%%%%%%%%%%%%%%%%%%%%%%%%%%%%%%%%%%%%%
\section*{Acknowledgements}
%\hspace*{\parindent}
%%%%%%%%%%%%%%%%%%%%%%%%%%%%%%%%%%%%%%%%%%%
The author would like to thank the referee for many detailed suggestions
leading to substantial improvement of this paper.
%% References with bibTeX database:

%thebibliography

%thebibliography

\end{document}